\newtheorem{thm}{Theorem}
\newtheorem{lem}{Lemma}[section]
\newtheorem{prop}{Proposition}
\theoremstyle{definition}
\numberwithin{equation}{section}
\begin{document}


\baselineskip=17pt



\title[On the asymptotic formula]{On some sums involving the counting function
of non-isomorphic abelian groups}

\author[]{Haihong Fan, Wenguang Zhai}
\address{Department of Mathematics\\
 China University of Mining and Technology\\
Beijing 100083, P. R. China}
\email{fanhaihong1@hotmail.com, zhaiwg@hotmail.com}

\thanks{This work is  supported by the National Natural Science Foundation of China(Grant No. 11971476).}

\date{}

\begin{abstract} Let $a(n)$ denote the number of non-isomorphic abelian groups with $n$ elements. In 1991 Ivi\'{c} proved an asymptotic formula of the sum $\sum_{n\leq x} a(n+ a(n)).$ In this paper, we will prove a sharper  asymptotic formula for this sum.

\end{abstract}

\subjclass[2000]{11N37}

\keywords{finite abelian group,  Dirichlet L-function, Perron's formula}

 \maketitle

\section{\bf Introduction}

Denote by $a(n)$ the number of non-isomorphic abelian groups with $n$ elements. It is well-known that $a(n)$ is $a$ multiplicative function such that $a(p^{\alpha})= P(\alpha)$ for any prime $p$ and integer $\alpha\geq 1$, where $P(\alpha)$ is the number of partitions of $\alpha$.

The properties of the function $a(n)$ is a classical subject in analytic number theory and attracted interests of many authors. The asymptotic behaviour of the sum $\sum_{n\leq x}a(n)$
is an old problem, which goes back to Erd\"{o}s and Szekeres \cite{ES}. For later
improvements, see \cite{Schw,Sch,Sch2}. The local densities of $a(n)$ were studied in \cite{Ri,IGT,Iv}. The iteration problem of $a(n)$ can be found in \cite{EI}.

A. Ivi\'{c} \cite{Iv2} studied the asymptotic behaviour of the sum
$$Q(x):=\sum_{n \leq x} a(n+a(n)).$$
He proved that the asymptotic formula
\begin{equation}
Q(x) = C x + O(x^{11/12 + \varepsilon})
\end{equation}
holds, where   $C > 0$ is a positive constant.

In this paper, we shall prove the following

\begin{thm}\label{thm1} For any $\varepsilon> 0,$ we have the asymptotic formula
\begin{equation}
Q(x) = C x + O(x^{3/4+ \varepsilon}),
\end{equation}
where the $O$-constant depends only on $\varepsilon.$
\end{thm}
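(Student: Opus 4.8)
The plan is to use the fact that $a(n)$ depends only on the powerful part of $n$, that $a(m)\ll_\varepsilon m^\varepsilon$ uniformly, and to reduce $Q(x)$ to a problem about counting integers in arithmetic progressions. Write $a=\mathbf 1*g$ in the Dirichlet‑convolution sense, so that $g$ is the multiplicative function with $\sum_{n}g(n)n^{-s}=\prod_{k\ge 2}\zeta(ks)=\zeta(2s)\zeta(3s)\cdots$; since no factor $\zeta(s)$ occurs, $g(p)=0$ for every prime $p$, hence $g$ is supported on powerful integers. Moreover $g\ge 0$, $g(d)\ll_\varepsilon d^\varepsilon$, and because there are $O(D^{1/2})$ powerful integers up to $D$ one gets $\sum_{d\le D}g(d)\ll D^{1/2+\varepsilon}$ and $\sum_{d>D,\ d\text{ powerful}}g(d)/d\ll D^{-1/2+\varepsilon}$. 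Applying $a(N)=\sum_{d\mid N}g(d)$ with $N=n+a(n)$ and interchanging summations,
\begin{equation*}
Q(x)=\sum_{d\text{ powerful}}g(d)\,N_d(x),\qquad N_d(x):=\#\{\,n\le x:\ d\mid n+a(n)\,\},
\end{equation*}
where only $d\ll x$ contribute, since $0<n+a(n)\le x+O(x^\varepsilon)$.

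Next I would estimate $N_d(x)$ for $d\le D$, with $D$ chosen later. Writing $n=sm$ with $s$ powerful, $m$ squarefree and $(m,s)=1$, we have $a(n)=a(s)$, so
\begin{equation*}
N_d(x)=\sum_{\substack{s\text{ powerful}\\ s\le x}}\#\{\,m\le x/s:\ \mu^2(m)=1,\ (m,s)=1,\ d\mid sm+a(s)\,\}.
\end{equation*}
For fixed $s$ the condition $d\mid sm+a(s)$, together with $(m,s)=1$, confines $m$ to a union of residue classes of total modulus $\asymp d$: prime factors of $d$ dividing $s$ must divide $\gcd(s,a(s))$, which is $O_\varepsilon(s^\varepsilon)$, so they cost essentially nothing. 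Thus the inner sum is a count of squarefree integers coprime to $s$ in an arithmetic progression, for which the elementary estimate $\#\{m\le Y:\mu^2(m)=1,\ m\equiv a\pmod q\}=\rho\,Y+O(\sqrt Y\,)$ (from $\mu^2(m)=\sum_{e^2\mid m}\mu(e)$, truncating the $e$‑sum) holds uniformly in $q$ and $a$. Summing over $s$ (using $\sum_{s\le x,\ s\text{ powerful}}s^{-1/2}\ll\log x$) gives
\begin{equation*}
N_d(x)=\kappa_d\,x+O\!\big(x^{1/2}\log x\big),\qquad \kappa_d\ll_\varepsilon d^{-1+\varepsilon},
\end{equation*}
with $\kappa_d=\sum_s(\text{density})/s$ an absolutely convergent series.

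For $d>D$ I would use instead that $n\mapsto n+a(n)$ is at most $O_\varepsilon(x^\varepsilon)$‑to‑one on $[1,x]$ (because $a(n)\ll_\varepsilon n^\varepsilon$), whence $N_d(x)\ll_\varepsilon x^\varepsilon(x/d+1)$. Combining, $\sum_{d\le D}g(d)N_d(x)=Cx+O\big(xD^{-1/2+\varepsilon}+x^{1/2+\varepsilon}D^{1/2+\varepsilon}\big)$ with $C=\sum_{d\text{ powerful}}g(d)\kappa_d$, the series being convergent since $\sum_{d\text{ powerful}}d^{-1+\varepsilon}<\infty$; and $\sum_{d>D}g(d)N_d(x)\ll_\varepsilon x^\varepsilon\big(x\sum_{d>D,\text{ powerful}}g(d)/d+\sum_{d\le Cx}g(d)\big)\ll x^{1+\varepsilon}D^{-1/2}+x^{1/2+\varepsilon}$. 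Choosing $D=x^{1/2}$ makes the total error $O(x^{3/4+\varepsilon})$, and $C>0$ since every term of the series is nonnegative and the $d=1$ term is positive; $C$ then coincides with Ivi\'c's constant by uniqueness of the main term in $(1.1)$.

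The main obstacle is the uniform evaluation of $N_d(x)$ — keeping track, as $s$ ranges over all powerful numbers, of the progressions produced by $d\mid sm+a(s)$ when $d$ and $s$ share prime factors. The delicate case is a prime $p$ with $v_p(s)=v_p(a(s))$, where the $p$‑part of $d$ may exceed that of $\gcd(s,a(s))$ at the cost of an extra congruence on $m$; one has to verify that the densities so produced still sum to $\kappa_d\ll_\varepsilon d^{-1+\varepsilon}$ (this uses that $a(s)\ll_\varepsilon s^\varepsilon$ forces such exceptional $s$ to be large, so their reciprocals converge). Alternatively, in place of the elementary count one can feed the Dirichlet series $\sum_{n\equiv a\,(q)}a(n)n^{-s}=\varphi(q)^{-1}\sum_{\chi\bmod q}\bar\chi(a)\prod_{k\ge 1}L(ks,\chi^k)$ into Perron's formula, shift past the poles at $s=1,\tfrac12,\tfrac13$ (the pole at $\tfrac12$ coming from the quadratic characters), and bound the remaining integral by convexity for the Dirichlet $L$‑functions; splitting the $d$‑ (equivalently $q$‑) sum at $x^{1/2}$ again yields the exponent $3/4$.
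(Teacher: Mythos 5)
Your argument is correct in outline and reaches the exponent $3/4+\varepsilon$, but it takes a genuinely different route from the paper. You convolve the \emph{outer} factor, writing $a=\mathbf 1*g$ with $g\ge 0$ supported on squarefull numbers, so that $Q(x)=\sum_d g(d)N_d(x)$ with $N_d(x)=\#\{n\le x:\ d\mid n+a(n)\}$; after the decomposition $n=sm$ ($s$ squarefull, $m$ squarefree, coprime) the congruence $d\mid sm+a(s)$ pins $m$ to one residue class modulo $d/\gcd(s,d)$ (solvable only when $\gcd(s,d)\mid a(s)\ll s^\varepsilon$), and everything reduces to the elementary count of squarefree integers in a progression with error $O(\sqrt{Y})$, uniformly in the modulus. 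The paper instead keeps $a$ on the shifted argument and is forced to evaluate $\sum_{m\le x,\,m\equiv k\,(r)}a(m)$ uniformly for $r\ll x^{1/2}$ (Proposition 1), which it does analytically: Dirichlet characters, the factorization $D_u(z,\chi)=L(z,\chi)L(2z,\chi^2)G(z,\chi)F_u(z,\chi)$, Perron's formula, a contour shift to $\Re z=1/2$, and Rane's mean-value theorem for $L(1/2+it,\chi)$, giving the error $O(d(r)x^{1/2}\log^{2.5}x)$; the split of the squarefull modulus at $y=x^{1/2}$ in the paper plays the role of your split of $d$ at $D=x^{1/2}$. Your route is more elementary (no $L$-functions at all), and its key saving is that $g$ is supported on squarefull $d$, so only $O(D^{1/2})$ moduli up to $D$ occur and an error of $x^{1/2+\varepsilon}$ per modulus is affordable; the flagged bookkeeping when $d$ and $s$ share primes, and the uniform squarefree-in-AP count with the extra condition $(m,s)=1$ (Möbius over the $O(s^\varepsilon)$ squarefree divisors of $s$), are fiddly but sound, and your tail estimates ($N_d(x)\ll x^\varepsilon(x/d+1)$ from the map $n\mapsto n+a(n)$ being $x^\varepsilon$-to-one, $\sum_{d>D}g(d)/d\ll D^{-1/2+\varepsilon}$) check out. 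What the paper's heavier machinery buys is robustness: Proposition 1 is a uniform arithmetic-progression result for $a(n)$ of independent interest, and the same $L$-function scheme proves Theorem 2 for $d_k(n+a(n))$, $k\in\{2,3,4\}$, where your trick breaks down because the corresponding convolving function is no longer supported on a sparse set, so the per-modulus error can no longer be summed trivially.
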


For each  $k\geq 2$ fixed, let $d_k(n)$ denote the number of ways $n$ can be written as a product of $k$ natural numbers. It is a classical problem in analytic number
theory to study the counting function $D_k(x):=\sum_{n\leq x}d_k(n).$   We have the
the asymptotic formula
\begin{eqnarray}
D_k(x)=xP_k(\log x)+O(x^{\alpha_k+\varepsilon}),
\end{eqnarray}
where $P_k(t)$ is a polynomial in $t$ of degree $k-1,$ and $0<\alpha_k<1$ is a
real constant(see Ivi\'c \cite{IV} for more details). For example, we have
$\alpha_2\leq 1/2, \alpha_3\leq 1/2,\alpha_4\leq 1/2. $

By the same approach we can prove the following Theorem 2.

\begin{thm}\label{thm2} Let $k\in \{2,3,4\}.$ Then we have
\begin{equation}
\sum_{n\leq x} d_k(n + a(n)) = xQ_k(\log x) + O(x^{3/4+ \varepsilon}),
\end{equation}
where $Q_k(t)$ is a polynomial in $t$ of degree $k-1$ and the $O$-constant depends only on $\varepsilon.$
\end{thm}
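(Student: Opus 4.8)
The plan is to carry out the argument underlying Theorem~\ref{thm1} with $d_k(\cdot)$ in place of the outer copy of $a(\cdot)$, the new feature being the extra powers of $\log$ that $d_k$ contributes. First I would split off the powerful part of $n$: every $n$ has a unique factorisation $n=qm$ with $q$ powerful (squarefull), $m$ squarefree and $(m,q)=1$, and then $a(n)=a(q)$ since $a$ is multiplicative with $a(p)=1$ for every prime $p$. Hence
\begin{equation*}
\sum_{n\le x}d_k(n+a(n))=\sum_{q\ \mathrm{powerful}}\ \sum_{\substack{m\le x/q\\ \mu^2(m)=1,\ (m,q)=1}}d_k\big(qm+a(q)\big).
\end{equation*}
Since the number of powerful $q\le T$ is $O(T^{1/2})$, one has $\sum_{q>Q,\ \mathrm{powerful}}q^{-1}\ll Q^{-1/2}$ and $\sum_{q\le Q,\ \mathrm{powerful}}q^{-1/2+\varepsilon}\ll_\varepsilon Q^{\varepsilon}$; combining this with Shiu's bound $\sum_{m\le M}d_k(qm+c)\ll_\varepsilon M(\log 2qM)^{k-1}q^{\varepsilon}$ (used as a trivial estimate) one sees that the terms with $q>x^{1/2}$ — and, at the end, the tails of the various series — together contribute only $O(x^{3/4+\varepsilon})$. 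It thus suffices, for each fixed powerful $q\le x^{1/2}$ and the fixed positive integer $c=a(q)$, to establish for $S_q:=\sum_{m\le x/q,\ \mu^2(m)=1,\ (m,q)=1}d_k(qm+c)$ an estimate $S_q=x\,R_q(\log x)+O\!\big(x^{3/4+\varepsilon}q^{-1/2+\varepsilon}\big)$ with $R_q$ a polynomial of degree $k-1$ whose coefficients are $O(q^{-1+\varepsilon})$; summing over $q$ then produces $xQ_k(\log x)+O(x^{3/4+\varepsilon})$.

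To treat $S_q$ I would remove the squarefree and coprimality constraints by M\"obius inversion. Restricting to $(e,q)=1$ and using $\mu^2(m)=\sum_{e^2\mid m}\mu(e)$ together with $[(m,q)=1]=\sum_{f\mid(m,q)}\mu(f)$ gives
\begin{equation*}
S_q=\sum_{(e,q)=1}\mu(e)\sum_{f\mid q}\mu(f)\!\!\sum_{\substack{c<N\le x_0\\ N\equiv c\ (q e^{2} f)}}\!\! d_k(N),\qquad x_0=q\lfloor x/q\rfloor+c=x+O(q),
\end{equation*}
after which I truncate the $e$-summation at a parameter $E_q$, bounding the range $e>E_q$ by Shiu's theorem by $\ll x^{1+\varepsilon}(\log x)^{k-1}/(qE_q)$. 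The crucial input for the remaining range is an asymptotic formula for $d_k$ in arithmetic progressions: for $k\in\{2,3,4\}$ and $1\le D\le Z$,
\begin{equation*}
\sum_{\substack{N\le Z\\ N\equiv c\ (D)}}d_k(N)=\mathrm{MT}_k(Z;D,c)+O\!\big(Z^{1/2+\varepsilon}D^{\varepsilon}\big),
\end{equation*}
where the main term is $Z$ times a polynomial of degree $k-1$ in $\log Z$ and satisfies $\mathrm{MT}_k(Z;D,c)\ll Z\varphi(D)^{-1}(\log Z)^{k-1}D^{\varepsilon}$. When $(c,D)=1$ this should follow from orthogonality, $\sum_{N\equiv c\,(D)}d_k(N)N^{-s}=\varphi(D)^{-1}\sum_{\chi\,(D)}\overline{\chi}(c)L(s,\chi)^{k}$, by a truncated Perron formula at height $T=Z^{1/2}$, a shift of contour to $\Re s=\tfrac12+\varepsilon$ (legitimate for $k\le 4$ since $\alpha_k\le\tfrac12$), extraction of the order-$k$ pole of the $\chi=\chi_0$ term as $\mathrm{MT}_k$, and the hybrid fourth moment $\sum_{\chi\,(D)}\int_{-T}^{T}|L(\tfrac12+it,\chi)|^{4}\,dt\ll (DT)^{1+\varepsilon}$ to bound everything else (with a Cauchy--Schwarz interpolation for $k=3$ and the mean square for $k=2$); the case $(c,D)>1$ reduces to this by pulling out $g=(c,D)$ and expanding $d_k$ of the common part into a short divisor sum.

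Inserting this into $S_q$ with $D=qe^{2}f$ and $Z=x_0\asymp x$ leaves the main contribution $\sum_{e\le E_q}\mu(e)\sum_{f\mid q}\mu(f)\,\mathrm{MT}_k(x_0;qe^{2}f,c)$, against which the error $\ll E_q\,x^{1/2+\varepsilon}q^{\varepsilon}$ from the progressions is to be balanced with the discarded tail $\ll x^{1+\varepsilon}(\log x)^{k-1}/(qE_q)$; the choice $E_q=x^{1/4}q^{-1/2}$ makes both $O(x^{3/4+\varepsilon}q^{-1/2+\varepsilon})$. Completing the $e$-sum to all $e\ge1$ (the added tail of main terms is again $O(x^{3/4+\varepsilon}q^{-1/2+\varepsilon})$ in total), replacing $x_0$ by $x$ at cost $O(x^{1/2+\varepsilon})$, and recognising the resulting absolutely convergent series in $e,f$ and the powerful $q$ as $x$ times a polynomial of degree $k-1$ in $\log x$ should give Theorem~\ref{thm2}. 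I expect the hard part to be the displayed estimate for $d_k$ in arithmetic progressions with error $O(Z^{1/2+\varepsilon})$ and only a $D^{\varepsilon}$ loss in the modulus: this is precisely where $k\le 4$ is forced, since it relies on the fourth power moment of Dirichlet $L$-functions (equivalently on $\alpha_k\le\tfrac12$); the remaining difficulty, of assembling the accumulated residues into a single polynomial $Q_k$ of degree exactly $k-1$, is clerical.
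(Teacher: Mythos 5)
Your proposal is correct in outline and is essentially the approach the paper intends: the paper gives no separate argument for Theorem~\ref{thm2}, stating only that it follows by the method of Theorem~\ref{thm1}, and your plan reproduces exactly that method — the squarefull/squarefree decomposition with $a(qm)=a(q)$, M\"obius removal of the squarefree and coprimality conditions, and an arithmetic-progression analogue of Proposition~\ref{Prop_1} for $d_k$ proved via character orthogonality, Perron's formula and a contour shift to $\Re s=1/2+\varepsilon$, with the hybrid mean-square (Lemma~\ref{Lemma_2.4}) replaced by the hybrid fourth moment of $L(1/2+it,\chi)$, which is precisely why $k\le 4$ (equivalently $\alpha_k\le 1/2$) is required. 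The only place your sketch is thinner than what a full write-up needs is the treatment of the horizontal segments for $k=4$, where convexity alone gives $Z^{1/2}D$ and one must choose the Perron height by an averaging argument (or use the moment bound at a well-chosen height), a standard but necessary refinement of ``the fourth moment bounds everything else.''
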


Since the proof of Theorem 2 is similar to that of Theorem 1. So for simplicity we only prove Theorem 1. In Section 2 we quote some lemmas which are needed for our proof. In Section 3 we study a sum of $a(n)$ in arithmetic progression. The proof of Theorem 1 will be given in Section 4.

{\bf Notations.} Throughout this paper,   ${\Bbb N}$ and ${\Bbb C}$  denote the set of positive integers and the set of complex numbers, respectively.
We always use $q$ denote  square-free numbers and $s$ denote  square-full numbers. Further, $\varphi$ is Euler's totient function, $\mu$ is the M\"obius function, $\chi$ denotes a Dirichlet character modulo $r$, $L(z,\chi)$ denotes the Dirichlet $L$-function corresponding to $\chi$. For $k\geq 2,$ let $d_k(n)$ denote the number of ways $n$ can be written as a product of $k$ natural numbers, and $d(n)=d_2(n)$. In this paper, $\varepsilon$ always
denotes a small enough positive constant.

\section{\bf Preliminary lemmas}

In order to get the result of the theorem, we shall make use of the following lemmas.

\begin{lem}\label{Lemma_2.1}  Suppose $g(n)\in {\Bbb C}(n\geq 1)$ such that the Dirichlet series
$G(s):=\sum_{n\geq 1}g(n)n^{-s}$ is absolutely convergent for $\sigma>\sigma_a$ and
$$\sum_{n\geq 1}|g(n)|n^{-\sigma}\leq B(\sigma), \ \ \sigma>\sigma_a.$$
Suppose further that $|g(n)|\leq H(n)(n\geq 1),$ where $H(u)>0$ is a function defined
 on $[1,\infty)$ which satisfies $H(u)\asymp H(v)$ for
$u\asymp v.$ Suppose $b>\sigma_a,$
$T\geq 1,$ $x\geq 1,$  $x\notin {\Bbb N}.$  Then we have
\begin{eqnarray*}
\sum_{n\leq x}g(n)&&=\frac{1}{2\pi i}\int_{b-iT}^{b+iT}G(s)\frac{x^s}{s}ds+O\left(\frac{x^b B(b)}{T}\right)\\
&&\ \ +O\left(xH(2x)\min\left(1,\frac{\log x}{T}\right)\right)+O\left(xH(N)\min\left(1,\frac{x}{T\Vert x\Vert}\right)\right),
\end{eqnarray*}
 where $N$ is the integer nearest to $x$ and $\Vert x\Vert=|x-N|.$
\end{lem}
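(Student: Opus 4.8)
The plan is to obtain this from the elementary Perron kernel estimate, followed by a split of the resulting error sum according to the distance from $n$ to $x$. First I would invoke the classical estimate: for $y>0$ with $y\ne 1$, and for $b>0$, $T\ge 1$,
\begin{equation*}
\frac{1}{2\pi i}\int_{b-iT}^{b+iT}\frac{y^{s}}{s}\,ds=\delta(y)+O\!\left(y^{b}\min\!\left(1,\frac{1}{T\,|\log y|}\right)\right),
\end{equation*}
where $\delta(y)=1$ for $y>1$ and $\delta(y)=0$ for $0<y<1$; this is standard and comes from moving the line of integration to $\mp\infty$ (so one picks up the residue $1$ at $s=0$ precisely when $y>1$) and estimating the two horizontal segments for the term $1/(T|\log y|)$, together with a direct estimate of the integral near $y=1$ for the term $O(y^{b})$. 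Since $x\notin\N$ we have $x/n\ne1$ and $n\le x\iff n<x$ for every integer $n\ge1$, so $\sum_{n\le x}g(n)=\sum_{n\ge1}g(n)\delta(x/n)$; applying the kernel estimate with $y=x/n$ therefore gives
\begin{align*}
\sum_{n\le x}g(n)&=\frac{1}{2\pi i}\sum_{n\ge1}g(n)\int_{b-iT}^{b+iT}\frac{(x/n)^{s}}{s}\,ds\\
&\quad+O\!\left(\sum_{n\ge1}|g(n)|\Big(\frac{x}{n}\Big)^{b}\min\!\Big(1,\frac{1}{T|\log(x/n)|}\Big)\right).
\end{align*}

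For the first term I would justify interchanging summation and integration: on the vertical segment $\sigma=b$ one has $\sum_{n\ge1}|g(n)|\,|(x/n)^{s}/s|\le x^{b}B(b)/|s|$, which is integrable in $s$ over $[b-iT,b+iT]$, so Fubini applies and this term equals $\frac{1}{2\pi i}\int_{b-iT}^{b+iT}G(s)\,x^{s}s^{-1}\,ds$, the main term of the lemma. Everything then reduces to estimating $E:=\sum_{n\ge1}|g(n)|(x/n)^{b}\min(1,(T|\log(x/n)|)^{-1})$, which I would split into three ranges. For $n\le x/2$ or $n\ge2x$ we have $|\log(x/n)|\gg1$, so the contribution is $\ll T^{-1}x^{b}\sum_{n\ge1}|g(n)|n^{-b}\le x^{b}B(b)/T$, the first error term. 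For $n=N$ (the integer nearest to $x$) we have $N\asymp x$, hence $(x/N)^{b}\asymp1$ and $|\log(x/N)|\asymp|x-N|/x=\Vert x\Vert/x$, so this single term is $\ll H(N)\min(1,x/(T\Vert x\Vert))$, which is dominated by the third error term.

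The remaining and most delicate range is $x/2<n<2x$ with $n\ne N$. Here $n\asymp x$, so the slow variation $H(u)\asymp H(v)$ for $u\asymp v$ lets us replace $|g(n)|\le H(n)$ by $H(2x)$ up to a constant, and $|\log(x/n)|\asymp|x-n|/x$; moreover $|x-n|\ge\tfrac12$ for $n\ne N$, and for each integer $j\ge0$ there are at most two such $n$ with $j\le|x-n|<j+1$. Hence this part of $E$ is $\ll H(2x)\sum_{n\ne N}\min(1,x/(T|x-n|))$, and the inner sum is $\ll x\min(1,(\log x)/T)$: it is $\ll x$ trivially (there are $\ll x$ terms, each at most $1$), while separating the terms with $|x-n|\le x/T$ from the rest gives $\ll x/T+\sum_{x/T<k\ll x}x/(Tk)\ll(x\log x)/T$. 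This produces the second error term, and summing the three contributions completes the proof. I expect the only genuine obstacle to be bookkeeping: handling the range of $n$ close to $x$, making the slow-variation hypothesis on $H$ do its job there, and checking that the resulting sum comes out as $x\min(1,(\log x)/T)$ uniformly for $T\ge1$.
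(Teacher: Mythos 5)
The paper does not actually prove this lemma: it is quoted as the standard truncated Perron formula with a pointer to Pan--Pan (Theorem 6.5.2) and Ivi\'c (formula (A.10)), and your argument is exactly the classical derivation those sources give (kernel estimate $\frac{1}{2\pi i}\int_{b-iT}^{b+iT}y^{s}s^{-1}ds=\delta(y)+O(y^{b}\min(1,1/(T|\log y|)))$, Fubini on the line $\sigma=b$, then splitting the error sum into far range, the term $n=N$, and the near range), so in substance it is correct and matches the cited proof. One small caveat: with your choice of near range $x/2<n<2x$, the claimed bound $\sum_{n\neq N}\min(1,x/(T|x-n|))\ll x\min(1,(\log x)/T)$ is not uniform for $1<x<2$ (e.g.\ $x=1+\delta$, where the single term $n=2$ contributes $\asymp\min(1,1/T)$ while the claimed bound is $\asymp\delta/T$); the lemma is still true there because for such $n$ one has $|\log(x/n)|\gg1$, so those finitely many terms should instead be absorbed into the $x^{b}B(b)/T$ error (equivalently, take a narrower near range such as $|n-x|<x/4$, which is empty unless $x>2$). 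With that one-line adjustment, and noting $x\geq1$ so your $n=N$ bound is indeed dominated by the stated third error term, the proof is complete; in the paper's application $X=x/u\gg x^{1/2}$ is large, so the corner case never arises.
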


\begin{proof} This is the well-known Perron's formula.
See for example, Theorem 6.5.2 of Pan and Pan \cite{PP} or the formula (A.10) in Appendix of Ivi\'c \cite{IV}.
\end{proof}

\begin{lem} \label{Lemma_2.2}   Let $\chi$ be a Dirichlet character modulo $r$. Then we have
$$L(\sigma+it,\chi) \ll \log r(|t|+ 2)\ \ (\sigma\geq 1).$$
\end{lem}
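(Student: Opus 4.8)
The statement to prove is the classical convexity-type bound $L(\sigma+it,\chi)\ll \log r(|t|+2)$ for $\sigma\ge 1$. My plan is to reduce to the line $\sigma=1$ and a bounded strip just to its right, since for $\sigma\ge 2$ the bound is trivial: one has $|L(\sigma+it,\chi)|\le \zeta(\sigma)\le \zeta(2)=O(1)$, which is absorbed into $\log r(|t|+2)$. So the only work is in the range $1\le\sigma\le 2$. There I would use the standard truncated Euler–Maclaurin / partial summation representation of $L(s,\chi)$. Writing $S(u)=\sum_{n\le u}\chi(n)$, one has $|S(u)|\le r$ trivially (and in fact $|S(u)|\le r/2$ by cancellation over a full period, but $r$ suffices), and by partial summation, for any parameter $y\ge 1$,
\[
L(s,\chi)=\sum_{n\le y}\frac{\chi(n)}{n^s}+\frac{S(y)}{s-1}\cdot\frac{1}{y^{s-1}}\;(\text{schematically})+s\int_y^\infty\frac{S(u)-S(y)}{u^{s+1}}\,du,
\]
or more cleanly the version $L(s,\chi)=\sum_{n\le y}\chi(n)n^{-s}+O\!\big(r\,|s|\,y^{-\sigma}\big)$ valid for $\sigma>0$.

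The key step is then to estimate the finite sum $\sum_{n\le y}\chi(n)n^{-\sigma}$ trivially by $\sum_{n\le y}n^{-1}\ll \log(2y)$ (using $\sigma\ge 1$), and to choose $y$ so that the tail term $r|s|y^{-\sigma}\le r(|t|+2)y^{-1}$ is $O(1)$; taking $y=r(|t|+2)$ does the job. This yields $L(\sigma+it,\chi)\ll \log\big(2r(|t|+2)\big)+O(1)\ll\log r(|t|+2)$ for $1\le\sigma\le2$, and combined with the trivial range $\sigma\ge2$ this gives the lemma. One should just note the harmless edge case when $\chi$ is principal, where $L(s,\chi)=\zeta(s)\prod_{p\mid r}(1-p^{-s})$ has a pole at $s=1$: there one restricts to $|t|\ge$ some constant or uses $\sigma\ge1$ with the $(s-1)^{-1}$ term bounded since $|s-1|\ge|t|$ when $\sigma\ge1$, which is again $O(1/|t|)$ times $|S(y)|$ and causes no trouble once $y$ is chosen as above — indeed on the line $\sigma=1$ with $t\neq0$ the factor $1/(s-1)$ is bounded by $1/|t|$, and the contribution $r/(|t| y)\le 1$ with the same choice of $y$.

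The main obstacle is essentially bookkeeping rather than a genuine difficulty: one must be careful that all the $O$-constants are absolute and that the argument handles the principal character and small $|t|$ uniformly, so that a single clean bound $\ll\log r(|t|+2)$ holds in the full range $\sigma\ge1$. Since this is a textbook estimate (it appears e.g.\ in Montgomery–Vaughan or in Prachar), I expect the authors simply to cite a reference; the partial-summation argument above is the self-contained alternative.
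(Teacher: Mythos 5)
The paper gives no argument at all here: Lemma \ref{Lemma_2.2} is simply cited from Pan--Pan \cite{PP}, so your self-contained partial-summation proof is a different (more explicit) route. For non-principal $\chi$ your outline is correct and standard: $|S(u)|\le r$ because the character values cancel over each full period, the approximate formula $L(s,\chi)=\sum_{n\le y}\chi(n)n^{-s}+O(r|s|y^{-\sigma})$ holds for $\sigma>0$, the finite sum is $\ll\log 2y$ for $\sigma\ge1$, and the choice $y=r(|t|+2)$ together with the trivial range $\sigma\ge2$ gives the lemma with absolute constants.

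The one genuine gap is your treatment of the principal character. For $\chi=\chi_0$ the bound $|S(u)|\le r$ is false ($S(u)=\frac{\varphi(r)}{r}u+O(2^{\omega(r)})$ grows linearly), so the displayed approximate formula with error $O(r|s|y^{-\sigma})$ does not apply, and your proposed fix via ``$|s-1|\ge|t|$'' cannot be pushed through uniformly: the statement as written is in fact false for $\chi_0$ when $s$ is close to $1$, since $L(s,\chi_0)=\zeta(s)\prod_{p\mid r}(1-p^{-s})$ inherits the pole of $\zeta$. The correct repair is either to restrict the lemma to $\chi\ne\chi_0$ (which is how such statements are usually quoted), or to impose $|t|\ge1$ for $\chi_0$ and run your partial summation with $S(u)=\frac{\varphi(r)}{r}u+E(u)$, $|E(u)|\le 2^{\omega(r)}$, treating the linear part separately (it produces the harmless $\frac{\varphi(r)}{r}\cdot\frac{y^{1-s}}{s-1}\ll 1/|t|$ term) and choosing $y=2^{\omega(r)}(|t|+2)$. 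In the paper this subtlety never bites, because the lemma is only invoked on the horizontal segments $z=\sigma\pm iT$ with $T=X^{10}$ in (3.13)--(3.14), far from the pole; the authors sidestep the whole discussion by citing the reference.
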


\begin{proof} See, for example, \cite{PP}.
\end{proof}

\begin{lem} \label{Lemma_2.3} Let $\chi$ be a Dirichlet
character modulo $r$.  Then we have
$$L(\sigma+it,\chi) \ll d(r)  (r(|t|+2))^{\frac{1- \sigma}{2}}
 \log r(|t|+2),\ \ \ (1/2\leq \sigma\leq 1).$$
\end{lem}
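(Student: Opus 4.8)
The plan is to derive this from the functional equation together with the convexity (Phragm\'en--Lindel\"of) principle, bootstrapping off Lemma~\ref{Lemma_2.2}, and to produce the factor $d(r)$ by reducing to primitive characters. So I would proceed in three steps: (i) reduce to a primitive character; (ii) use the functional equation and Lemma~\ref{Lemma_2.2} to bound $L$ on the line $\sigma=0$; (iii) interpolate between the lines $\sigma=0$ and $\sigma=1$.

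For step (i), let $\chi^{*}$ modulo $r^{*}$ be the primitive character inducing $\chi$, where $r^{*}\mid r$, so that $L(s,\chi)=L(s,\chi^{*})\prod_{p\mid r}\bigl(1-\chi^{*}(p)p^{-s}\bigr)$. For $\sigma\geq 1/2$ the finite product has $\omega(r)$ factors (one for each prime divisor of $r$), each of modulus $\leq 1+p^{-1/2}<2$, hence is $\ll 2^{\omega(r)}\leq d(r)$. Since also $r^{*}\leq r$, it suffices to prove $L(\sigma+it,\chi^{*})\ll (r^{*}(|t|+2))^{(1-\sigma)/2}\log r^{*}(|t|+2)$ for primitive $\chi^{*}$; multiplying by $d(r)$ and replacing $r^{*}$ by $r$ then gives the claim.

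For steps (ii)--(iii), take $\chi^{*}$ primitive modulo $r^{*}>1$. The functional equation has the form $L(s,\chi^{*})=\tau\,(r^{*}/\pi)^{1/2-s}\,\frac{\Gamma((1-s+\delta)/2)}{\Gamma((s+\delta)/2)}\,L(1-s,\overline{\chi^{*}})$ with $|\tau|=1$ and $\delta\in\{0,1\}$. On $\sigma=0$ one has $L(1-s,\overline{\chi^{*}})\ll\log r^{*}(|t|+2)$ by Lemma~\ref{Lemma_2.2}, $(r^{*}/\pi)^{1/2-\sigma}=(r^{*}/\pi)^{1/2}$, and Stirling's formula gives $\bigl|\Gamma((1-s+\delta)/2)/\Gamma((s+\delta)/2)\bigr|\ll(|t|+2)^{1/2}$ uniformly in $t$ (including $t$ near $0$); hence $L(it,\chi^{*})\ll(r^{*}(|t|+2))^{1/2}\log r^{*}(|t|+2)$, while on $\sigma=1$, $L(1+it,\chi^{*})\ll\log r^{*}(|t|+2)$ directly from Lemma~\ref{Lemma_2.2}. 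To put both boundary bounds into power form I would work with $G(s):=L(s,\chi^{*})/\log(r^{*}(s+2))$, which is analytic and of finite order in a neighbourhood of the strip $0\leq\sigma\leq1$ (the denominator is analytic and nonvanishing there, and is $\asymp\log r^{*}(|t|+2)$ in modulus on the two edges); then $G(1+it)\ll 1$ and $G(it)\ll(r^{*}(|t|+2))^{1/2}$, so the convexity principle yields $G(\sigma+it)\ll(r^{*}(|t|+2))^{(1-\sigma)/2}$ throughout the strip, and multiplying back by $\log(r^{*}(s+2))$ gives the required bound for $1/2\leq\sigma\leq1$. The remaining case $r^{*}=1$ (i.e.\ $\chi$ principal, $L(s,\chi^{*})=\zeta(s)$) reduces to the classical convexity bound for $\zeta$, valid away from the pole at $s=1$; this is proved by the same argument applied to $(s-1)\zeta(s)$ (see e.g.\ \cite{PP}).

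I expect the only genuinely delicate point to be the Phragm\'en--Lindel\"of step. One must check that $L(s,\chi^{*})$, and hence $G$, has at most polynomial growth on vertical lines in the strip --- a standard fact that follows from the Dirichlet series for $\sigma$ close to $1$ together with the functional equation for $\sigma$ close to $0$ --- so that the convexity theorem applies; and one must insert the auxiliary factor $\log(r^{*}(s+2))$ (or an equivalent normalization) so that the boundary majorants are of power type rather than carrying a logarithm. The other ingredients --- the trivial estimate for the finite Euler product, and Stirling's formula for the Gamma quotient uniformly down to $t=0$ --- are routine.
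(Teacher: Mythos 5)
Your proof is correct and follows essentially the same route as the paper: the non-primitive case is reduced to the primitive one via $L(s,\chi)=L(s,\chi^{*})\prod_{p\mid r}\bigl(1-\chi^{*}(p)p^{-s}\bigr)$, with the Euler factors bounded by $2^{\omega(r)}\le d(r)$ for $\sigma\ge 1/2$. The only difference is that the paper simply quotes the convexity bound for primitive characters, whereas you derive it from the functional equation and the Phragm\'en--Lindel\"of principle; that derivation (including the Stirling estimate for the Gamma quotient and your remark that the principal-character case must stay away from the pole at $s=1$) is sound and standard.
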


\begin{proof} If $\chi$ is a primitive Dirichlet character, then we have
$$L(\sigma+it,\chi) \ll  (r(|t|+2))^{\frac{1- \sigma}{2}}
 \log r(|t|+2),\ \ \ (1/2\leq \sigma\leq 1).$$
 If $\chi$ is not a primitive Dirichlet character, then
 Lemma 2.3 follows from the relation(See, for example, \cite{PP})
 $$L(z,\chi)=L(z,\chi^*)\prod_{p|r}\left(1-\frac{\chi^*(p)}{p^z}\right),\ \ (\Re z>1).$$
\end{proof}

\begin{lem} \label{Lemma_2.4} Let $T\geq 2.$ Then we have
$$\sum_{\chi ( mod \ r)} \int_1^{T} |L(1/2+it,\chi)| ^{2} dt \ll \varphi(r) T \log rT.$$
\end{lem}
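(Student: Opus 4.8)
The plan is to deduce this mean-value estimate for Dirichlet $L$-functions from the approximate functional equation together with the orthogonality of characters. First I would separate the principal character $\chi_0\bmod r$: since $L(1/2+it,\chi_0)=\zeta(1/2+it)\prod_{p\mid r}(1-p^{-1/2-it})$ and $\prod_{p\mid r}(1+p^{-1/2})\le d(r)$, the classical bound $\int_1^T|\zeta(1/2+it)|^2\,dt\ll T\log T$ gives that $\chi_0$ contributes $\ll d(r)^2T\log T\ll\varphi(r)T\log rT$ (using $d(r)^2\ll\varphi(r)$); so assume henceforth $\chi\ne\chi_0$. Writing each such $\chi\bmod r$ as induced by a primitive character $\chi^*$ modulo some $m\mid r$ with $m>1$, and using $L(z,\chi)=L(z,\chi^*)\prod_{p\mid r,\ p\nmid m}(1-\chi^*(p)p^{-z})$ together with the fact that each primitive character modulo a divisor of $r$ induces exactly one character modulo $r$, the left-hand side becomes
$$\sum_{\substack{m\mid r\\ m>1}}\ \sum_{\chi^*\ \mathrm{prim}\ (m)}\ \int_1^T\bigl|L(1/2+it,\chi^*)\bigr|^2\,\Bigl|\prod_{p\mid r,\ p\nmid m}\bigl(1-\chi^*(p)p^{-1/2-it}\bigr)\Bigr|^2\,dt .$$

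For the core estimate I would insert the approximate functional equation on the critical line,
$$L(1/2+it,\chi^*)=\sum_{j\le\sqrt{mt}}\frac{\chi^*(j)}{j^{1/2+it}}+\varepsilon(\chi^*,t)\sum_{k\le\sqrt{mt}}\frac{\overline{\chi^*}(k)}{k^{1/2-it}}+O\bigl((mt)^{-1/4}\bigr),\qquad |\varepsilon(\chi^*,t)|=1,$$
and expand $\prod_{p\mid r,\ p\nmid m}(1-\chi^*(p)p^{-1/2-it})=\sum_{d\mid R}\mu(d)\chi^*(d)d^{-1/2-it}$ with $R=\prod_{p\mid r,\ p\nmid m}p$. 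Multiplying out, the main (non-error) terms have the shape $\mu(d_1)\mu(d_2)\chi^*(jd_1)\overline{\chi^*}(kd_2)(jkd_1d_2)^{-1/2}(kd_2/(jd_1))^{it}$, up to the root number (whose $t$-dependence is the standard one). To these I would apply $\sum_{\chi^*\ \mathrm{prim}\ (m)}\chi^*(a)\overline{\chi^*}(b)$, which equals $\varphi^*(m)$ (the number of primitive characters modulo $m$) on the diagonal $a=b$ and is $\ll\gcd(m,a-b)$ in general, together with $\int(kd_2/(jd_1))^{it}\,dt$, which is $\ll T$ always and of size $\asymp T$ precisely when $jd_1=kd_2$. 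On the diagonal $jd_1=kd_2$, writing $\ell:=jd_1$, one has $jkd_1d_2=\ell^2$, and the two Möbius sums collapse by $\sum_{d\mid g}\mu(d)=\mathbf{1}_{g=1}$, so the diagonal part for a fixed conductor $m$ is
$$\ \ll\ \varphi^*(m)\,T\sum_{\substack{\ell\le\sqrt{mT}\\(\ell,r)=1}}\frac1\ell\ \ll\ \varphi^*(m)\,T\log rT .$$

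Summing over $m\mid r$ and using $\sum_{m\mid r}\varphi^*(m)=\varphi(r)$ then yields exactly $\varphi(r)T\log rT$. The remaining work — which I expect to be the genuine obstacle — is to show that nothing else exceeds this: the off-diagonal terms $jd_1\ne kd_2$, handled by weighing $\gcd(m,jd_1-kd_2)$ (which is $d(m)^{O(1)}$ on average) against the oscillatory integral $\ll\max(jd_1,kd_2)/|jd_1-kd_2|$, just as in the classical proof that $\int_0^T|\zeta(1/2+it)|^2\,dt\ll T\log T$; the cross terms coming from the two pieces of the functional equation, which involve Gauss sums $\sum_{\chi^*}\tau(\overline{\chi^*})\chi^*(\cdot)$ and reduce to short additive character sums; and the error term $O((mt)^{-1/4})$, which is harmless because $\sum_{j\le\sqrt{mt}}j^{-1/2}\ll(mt)^{1/4}$. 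The $t$-dependence of the summation length $\sqrt{mt}$ is dealt with simply by keeping the constraint $\max(j,k)\le\sqrt{mt}$ inside the $t$-integration, exactly as in that classical argument; the approximate functional equation and the orthogonality (large sieve) input are standard and can be quoted from Pan and Pan \cite{PP}.
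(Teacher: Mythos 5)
Your overall strategy (approximate functional equation plus character orthogonality, with the diagonal producing $\varphi(r)T\log rT$ via $\sum_{m\mid r}\varphi^*(m)=\varphi(r)$) is the standard route to such a mean-value bound, and the diagonal computation you sketch is essentially right. But note that the paper does not prove this lemma at all: it quotes it from Rane \cite{Ra}. What you have written is a programme whose crux is exactly the part you defer, and that deferred part is the actual content of the cited reference, so as a proof the proposal has a genuine gap. Concretely: (i) the off-diagonal terms now carry the weight $\gcd(m,jd_1-kd_2)$ \emph{summed over all conductors} $m\mid r$, with summation lengths $\sqrt{mt}$ depending on $m$ and $t$, and the required bound must be uniform in $r$ and $T$; this is not literally ``just as in the classical proof'' for $\zeta$, where no modulus average is present, and it has to be carried out. (ii) The cross terms between the two halves of the functional equation involve $\sum_{\chi^*\ \mathrm{prim}\ (m)}\varepsilon(\chi^*,t)\chi^*(a)\overline{\chi^*}(b)$, i.e.\ Gauss-sum twists whose reduction to ``short additive character sums'' and subsequent estimation is a nontrivial step, not a quotable standard fact; in the literature this is usually circumvented by working with a smoothed approximate functional equation or by first proving a mean-value theorem for Dirichlet polynomials (hybrid large sieve) and then approximating $L(1/2+it,\chi)$ --- but the crude hybrid large sieve gives $rT\log rT$ rather than $\varphi(r)T\log rT$, so even that shortcut does not immediately yield the lemma as stated. (iii) The $t$-dependence of the truncation $\max(j,k)\le\sqrt{mt}$ cannot simply be ``kept inside the $t$-integration'' once you want to pull the character sum outside; one must either smooth or split dyadically, and this interacts with point (i).

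There are also smaller inaccuracies that would need repair even on the diagonal: the collapse $\sum_{d\mid g}\mu(d)=\mathbf{1}_{g=1}$ is only valid when the truncation $j\le\sqrt{mt}$ does not cut into the divisor sum, so near the endpoint the coefficients are genuinely divisor-bounded and must be estimated separately; and the resulting coprimality condition is $(\ell,R)=1$ with $R=\prod_{p\mid r,\,p\nmid m}p$, not $(\ell,r)=1$ (harmless, but symptomatic of the level of care the off-diagonal analysis would require). In short, the plan is plausible and points in the right direction, but as it stands the lemma is not proved; the honest alternatives are to carry out the off-diagonal and Gauss-sum analysis in full, or to do what the paper does and simply invoke the mean-square result of Rane \cite{Ra}.
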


\begin{proof} See, for example, \cite{Ra}.
\end{proof}

\begin{lem} \label{Lemma_2.5} Let $T\geq 3$ be a large real number. Suppose $z=\sigma+it, |t|\leq T, |\sigma-1/2|\leq 1/\log T.$ Then we have
$$\sum_{\chi\not= \chi_0}|L(z,\chi)|^2\ll \varphi(r)|z|\log^2 r(|z|+2).$$
\end{lem}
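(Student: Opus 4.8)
The plan is to deduce Lemma~\ref{Lemma_2.5} from the second‑moment estimate of Lemma~\ref{Lemma_2.4} by a standard convexity‑type argument in $t$‑aspect, reducing the problem from the line $\sigma=1/2$ to a thin vertical strip of width $O(1/\log T)$ around it. First I would observe that for $z=\sigma+it$ with $|\sigma-1/2|\le 1/\log T$, Lemma~\ref{Lemma_2.3} already gives a pointwise bound $L(z,\chi)\ll d(r)(r(|t|+2))^{(1-\sigma)/2}\log r(|t|+2)$; since $(1-\sigma)/2\le 1/4+O(1/\log T)$, the factor $(r(|t|+2))^{(1-\sigma)/2}$ is $\ll (r(|t|+2))^{1/4}T^{O(1/\log T)}\ll (r(|t|+2))^{1/4}$, so one could try to insert this crudely. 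But that only yields something of size roughly $\varphi(r)\cdot r^{1/2}|z|^{1/2}\log^2$, which is far weaker than the asserted $\varphi(r)|z|\log^2$. So the pointwise route is not enough, and the real point is to average.

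The key step is a mean‑to‑pointwise transfer. For fixed $z_0=\sigma_0+it_0$ in the strip, I would use the Cauchy/Phragm\'en--Lindel\"of-type device of expressing $L(z_0,\chi)^2$ as a contour integral (or, more elementarily, bounding $|L(z_0,\chi)|^2$ by an average of $|L(1/2+it,\chi)|^2$ over $t$ in an interval of length $O(1)$ around $t_0$, at the cost of an approximate functional equation / a mean value over a short segment). Concretely, using the subconvexity‑free approach: write $L(z_0,\chi)$ via its Dirichlet‑polynomial approximation plus the dual sum from the functional equation, square, sum over $\chi\ne\chi_0$, and apply the large sieve / orthogonality of characters to the resulting bilinear form; the diagonal term contributes $\varphi(r)\sum_{n\le r(|t|+2)}d(n)^2/n\ll \varphi(r)\log^4$, and the off‑diagonal is handled by Lemma~\ref{Lemma_2.4} after integrating over a unit $t$‑interval. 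Summing over $\chi\ne\chi_0$ and using $\sum_{\chi}\int_{t_0-1}^{t_0+1}|L(1/2+it,\chi)|^2\,dt\ll\varphi(r)(|t_0|+2)\log r(|t_0|+2)$ from Lemma~\ref{Lemma_2.4} (applied on $[1,T]$ and shifted), one arrives at
$$\sum_{\chi\ne\chi_0}|L(z_0,\chi)|^2\ll \varphi(r)(|t_0|+2)\log^2 r(|t_0|+2)\ll \varphi(r)|z_0|\log^2 r(|z_0|+2),$$
which is exactly the claim (the shift from $\sigma=1/2$ to $\sigma=\sigma_0$ costs only $T^{O(1/\log T)}=O(1)$ by Lemma~\ref{Lemma_2.3} applied to the difference, or by interpolating the squared modulus between two vertical lines).

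The main obstacle I expect is making the transfer from the $L^2$‑average on the critical line to the pointwise value in the strip \emph{uniform in $r$} and with only two logarithms lost. A clean way is to avoid the functional equation entirely and instead use the Gabriel/convexity inequality for $\|L(\cdot,\chi)\|$ on the three lines $\sigma=1/2$, $\sigma=1/2+1/\log T$ combined with Lemma~\ref{Lemma_2.2} on $\sigma=1$; but Lemma~\ref{Lemma_2.2} carries a factor $\log r(|t|+2)$ per character and summing its square over $\chi$ gives $\varphi(r)\log^2$, matching the target, so the interpolation $\|L(z_0,\chi)\|^2\le \|L\|_{\sigma=1/2}^{2(1-\theta)}\,\|L\|_{\sigma=1}^{2\theta}$ with $\theta=O(1/\log T)$ yields $|L(z_0,\chi)|^2$ bounded, up to $O(1)$ powers, by a geometric mean of the two‑line data; applying H\"older across $\chi$ then reduces everything to Lemma~\ref{Lemma_2.4} and Lemma~\ref{Lemma_2.2}. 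I would carry out the steps in this order: (i) reduce to $\sigma_0=1/2$ by the strip‑interpolation, absorbing the $T^{O(1/\log T)}$ factor; (ii) for $\sigma_0=1/2$, bound $|L(1/2+it_0,\chi)|^2$ by a mean over $t\in[t_0-1,t_0+1]$ using a standard smoothing (Montgomery--Vaughan type) lemma; (iii) sum over $\chi\ne\chi_0$ and invoke Lemma~\ref{Lemma_2.4}; (iv) collect the logarithmic factors and rewrite $|t_0|+2\asymp|z_0|$ to finish.
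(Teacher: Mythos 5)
The paper offers no proof of this lemma at all --- it simply cites Pan and Pan --- so your sketch has to stand on its own, and as written it has a genuine gap at its central step, the mean-to-pointwise transfer. You cannot bound $|L(1/2+it_0,\chi)|^2$ by its $L^2$-average over a unit segment of the critical line alone: a pointwise value of an analytic function is not controlled by a one-dimensional $L^2$ mean on the same line (a tall narrow spike defeats this). The standard repairs --- Gallagher's inequality $|f(t_0)|^2\ll\int\bigl(|f|^2+|f||f'|\bigr)dt$, or subharmonicity of $|L|^2$ averaged over a disc of radius $\asymp 1/\log T$ --- require, respectively, a mean-square estimate for $L'(1/2+it,\chi)$ summed over $\chi$, or mean values of $L$ on lines slightly off $\sigma=1/2$; neither is supplied by Lemmas \ref{Lemma_2.2}--\ref{Lemma_2.4}, and neither is addressed in your outline. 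The same objection applies to the Gabriel/three-lines variant: that kind of convexity interpolates \emph{integral means} over vertical lines, so it again yields averaged rather than pointwise-in-$t$ information, and applying H\"older over $\chi$ does not repair this. (There is also a small boundary issue: Lemma \ref{Lemma_2.4} only covers $t\in[1,T]$, so the segment $|t|\le 1$ is not literally available from it.)

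Your approximate-functional-equation variant is closer to a correct proof, but it is muddled as written. For $\sum_{\chi\ne\chi_0}|L(z,\chi)|^2$ the relevant Dirichlet polynomials have coefficients $\chi(n)n^{-z}$ and length $\asymp(r(|t|+2))^{1/2}$; your diagonal term $\varphi(r)\sum_{n\le r(|t|+2)}d(n)^2/n\ll\varphi(r)\log^4$ pertains to $L(z,\chi)^2$ rather than $|L(z,\chi)|^2$ and already overshoots the target $\log^2$, while the off-diagonal terms $n\equiv m\ (\mathrm{mod}\ r)$, $n\ne m$, are to be estimated directly --- they contribute $\ll\varphi(r)(|t|+2)\log r(|t|+2)$ --- not ``by Lemma \ref{Lemma_2.4} after integrating over a unit $t$-interval'', which is not a meaningful reduction since Lemma \ref{Lemma_2.4} bounds integrals of $|L|^2$, not bilinear forms. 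The clean argument is a fixed-$t$ one: reduce to primitive characters as in Lemma \ref{Lemma_2.3} if necessary, use the (approximate) functional equation to write $L(z,\chi)$ as two character sums of length $\asymp(r(|t|+2))^{1/2}$ (the shift $|\sigma-1/2|\le 1/\log T$ costs only a bounded factor), square, sum over $\chi$ by orthogonality, and bound diagonal plus off-diagonal; no $t$-integration and no appeal to Lemma \ref{Lemma_2.4} is needed. As it stands, your proposal does not establish the lemma.
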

\begin{proof} See, for example, \cite{PP}.
\end{proof}

\begin{lem} \label{Lemma_2.6}  We have
$$  \limsup_{n\rightarrow \infty}\  \log a(n)\cdot \frac{\log\log n}{\log n}=
\frac{\log 5}{4}.$$
\end{lem}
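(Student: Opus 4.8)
The plan is to read Lemma \ref{Lemma_2.6} as a maximal‑order statement. Writing $n=\prod_{p}p^{\alpha_p}$ and using multiplicativity of $a$, we have $\log a(n)=\sum_{p\mid n}\log P(\alpha_p)$, so the whole question reduces to the one‑variable extremal problem of maximizing the ``efficiency'' $\log P(\alpha)/\alpha$. Accordingly I would first record two elementary facts about the partition function, writing $\beta=\tfrac{\log 5}{4}$: (i) $P(\alpha)\le 5^{\alpha/4}$, i.e. $\log P(\alpha)\le\beta\alpha$, for every integer $\alpha\ge1$, with equality precisely at $\alpha=4$; and (ii) the Hardy--Ramanujan bound $\log P(\alpha)\le C\sqrt\alpha$ for an absolute constant $C$. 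Fact (i) is verified by direct computation for $\alpha\le 40$ (the tightest non‑trivial case being $\alpha=6$, where $11^4<5^6$) and, for $\alpha>40$, by comparing (ii) against $\beta\alpha$.

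For the lower bound I would take $n_k=(p_1p_2\cdots p_k)^4$, the fourth power of the product of the first $k$ primes, so that $a(n_k)=P(4)^k=5^k$ and $\log n_k=4\theta(p_k)$ with $\theta(x)=\sum_{p\le x}\log p$. By the prime number theorem $\theta(p_k)\sim p_k$ and $k=\pi(p_k)\sim p_k/\log p_k$, hence $\log n_k\sim 4p_k$, $\log\log n_k\sim\log p_k$, and therefore
$$\log a(n_k)\cdot\frac{\log\log n_k}{\log n_k}=\frac{\log 5}{4}\cdot\frac{k\log\log n_k}{\theta(p_k)}\longrightarrow\frac{\log 5}{4}.$$
This gives $\limsup_{n\to\infty}\log a(n)\cdot\frac{\log\log n}{\log n}\ge\frac{\log 5}{4}$.

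For the upper bound I would fix $\varepsilon>0$, choose a threshold $z=z(n)$ (ultimately $z=\log n/(\log\log n)^2$) and an exponent cutoff $A=A(n)$ (ultimately $A=(\log\log n)^3$), and split $\sum_{p\mid n}\log P(\alpha_p)$ into three pieces. For $p>z$, fact (i) together with $\alpha_p\log p>\alpha_p\log z$ gives $\log P(\alpha_p)<\tfrac{\beta}{\log z}\,\alpha_p\log p$, and summing over such $p$ yields a contribution $<\tfrac{\beta}{\log z}\sum_{p\mid n}\alpha_p\log p\le\tfrac{\beta\log n}{\log z}=(1+o(1))\beta\tfrac{\log n}{\log\log n}$. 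For $p\le z$ with $\alpha_p>A$, fact (ii) gives $\log P(\alpha_p)\le\tfrac{C}{\sqrt A}\alpha_p\le\tfrac{C}{(\log 2)\sqrt A}\alpha_p\log p$, summing to $O\big(\log n/\sqrt A\big)=o\big(\log n/\log\log n\big)$. Finally the primes $p\le z$ with $\alpha_p\le A$ number at most $\pi(z)$ and each contributes at most $\log P(A)\le C\sqrt A$, for a total $O\big(\pi(z)\sqrt A\big)=o\big(\log n/\log\log n\big)$ under the above choices. Adding the three bounds gives $\log a(n)\le(1+o(1))\beta\tfrac{\log n}{\log\log n}$, hence $\limsup\le\frac{\log 5}{4}$, which together with the lower bound proves the lemma.

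The step I expect to be delicate is organizing the upper bound so that the constant comes out as $\frac{\log 5}{4}$ and not something larger. A naive split of the primes into ``small'' and ``large'' double‑counts the budget $\log n=\sum_p\alpha_p\log p$ in the two sub‑estimates and only yields $\limsup\le\frac{\log 5}{2}$; the correct bookkeeping is to let the \emph{mass} of $\log n$ carried by the large primes be what multiplies $\beta/\log z$, while verifying that the small primes, though numerous, contribute only a lower‑order amount. This is precisely why the threshold $z$ must be pushed almost up to $\log n$ (so that $\log z=(1+o(1))\log\log n$), why the exponent cutoff $A$ is allowed to grow, and why Hardy--Ramanujan and the crude count $\pi(z)$ are both needed. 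Tuning $z$ and $A$ so that every error term is $o(\log n/\log\log n)$ is the crux of the argument.
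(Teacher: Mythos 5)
Your proposal is correct, and it is worth noting that the paper itself gives no argument for Lemma \ref{Lemma_2.6}: it simply cites Kr\"atzel's 1970 theorem on the maximal order of $a(n)$. What you have written is essentially a self-contained reconstruction of that classical result, organized around the extremal ratio $\max_{\alpha\ge1}\log P(\alpha)/\alpha=\tfrac{\log 5}{4}$ (attained at $\alpha=4$). The lower bound via $n_k=(p_1\cdots p_k)^4$, with $a(n_k)=5^k$ and $\log n_k=4\theta(p_k)\sim 4p_k$, is exactly the standard construction and is carried out correctly. The upper bound is also sound: fact (i) does hold (the check at $\alpha=6$, i.e.\ $11^4=14641<15625=5^6$, is the tight case, and the Hardy--Ramanujan bound $\log P(\alpha)\le C\sqrt{\alpha}$ with $C=\pi\sqrt{2/3}$ takes over once $\sqrt{\alpha}\ge C\cdot\tfrac{4}{\log 5}$, i.e.\ $\alpha\ge 41$), and your three-way split with $z=\log n/(\log\log n)^2$, $A=(\log\log n)^3$ does make each secondary term $o(\log n/\log\log n)$: the large primes give $(1+o(1))\beta\log n/\log\log n$ since $\log z=(1+o(1))\log\log n$, the small primes with large exponent give $O(\log n/\sqrt{A})$, and the small primes with small exponent give $O(\pi(z)\sqrt{A})=O(\log n/(\log\log n)^{3/2})$. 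Your closing remark about avoiding the double-counting of the budget $\log n$ is precisely the right point of care. So the proposal buys a complete elementary proof where the paper only offers a reference; the price is that you must actually verify $P(\alpha)\le 5^{\alpha/4}$ for $\alpha\le 40$ (or cite an explicit form of Hardy--Ramanujan), which the citation to Kr\"atzel sidesteps.
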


\begin{proof} See Kr\"{a}tzel \cite{Kr}.
\end{proof}

\section{\bf A sum of $a(n)$ in arithmetic progression}

In this section, we will study a sum  of $a(m)$ in arithmetic progression.
Let $x\geq 3$ be a large parameter, $r$ and $k$ are two natural numbers
such that $2\leq r\ll x^{1/2}$ and $1\leq k\ll x^{1/2}$. Define
 \begin{equation}
 T(x; k, r) :=  \sum_{m\leq x,\, m\equiv k\,(mod\,r)} a(m)
 \end{equation}
 which plays an important role in the proof of our theorem.
This sum has been studied by several authors. For example,
  Richert \cite{Ri},  Duttlinger \cite{Du} and Ivi\'c \cite{Ra}.
 Ivi\'c stated
 \begin{equation}
T(x; k, r) = B(r,k) x +O((rx)^{1/2+\varepsilon}), \  B(r,k)= O(1/r).
\end{equation}
Ivi\'c gave a detailed proof of (3.2) for the case $(r,k)=1.$

In this section we give a sharper asymptotic formula than (3.2). We shall prove the following   proposition.

\begin{prop} \label{Prop_1}  Uniformly for $r\ll x^{1/2}$
  we have
\begin{equation}
 T(x; k, r) = \frac{c(r, k)}{r} x + O(d(r)x^{1/2}\log^{2.5} x),
 \end{equation}
where $c(r,k)$ is defined by (3.18) such that $c(r,k)\ll d(r)$ and the $O$-constant is absolute.
\end{prop}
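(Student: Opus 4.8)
The plan is to attack $T(x;k,r)$ via Perron's formula (Lemma~\ref{Lemma_2.1}) applied to the Dirichlet series $\sum_{m\equiv k\,(r)} a(m) m^{-s}$, which by orthogonality of Dirichlet characters equals $\frac{1}{\varphi(r)}\sum_{\chi\,(\mathrm{mod}\,r)}\bar\chi(k)\, L_a(s,\chi)$, where $L_a(s,\chi)=\sum_{m\geq1}a(m)\chi(m)m^{-s}$. Since $a$ is multiplicative with $a(p)=1$, $a(p^2)=2$, etc., one has the factorization $L_a(s,\chi)=L(s,\chi)L(2s,\chi^2) H(s,\chi)$, where $H(s,\chi)=\sum_s h(s,\chi) \cdot (\cdots)$ is a Dirichlet series supported on square-full integers and absolutely convergent for $\Re s>1/3$ (this mirrors the classical identity $\sum a(n)n^{-s}=\zeta(s)\zeta(2s)\zeta(3s)\cdots$; for the present purpose keeping $\zeta(s)\zeta(2s)$ as the main factor and bounding the rest by an Euler product over square-full $m$ is enough). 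The only pole in $\Re s>1/2$ is the simple pole of $L(s,\chi)$ at $s=1$, which occurs precisely for the principal character $\chi=\chi_0$; its residue produces the main term $\frac{c(r,k)}{r}x$, and computing the residue gives the explicit formula (3.18) for $c(r,k)$ together with the bound $c(r,k)\ll d(r)$ (the $d(r)$ coming from the finite Euler product $L(2,\chi_0^2)H(1,\chi_0)$ truncated at $p\mid r$, as in Lemma~\ref{Lemma_2.3}).

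Next I would move the contour to $\Re s=1/2+\delta$ for a suitable small $\delta$ (eventually $\delta\asymp 1/\log x$), pick up the residue at $s=1$, and estimate the remaining horizontal and vertical integrals. On the line $\Re s=1/2+\delta$ write $L_a(s,\chi)=L(s,\chi)\cdot L(2s,\chi^2)H(s,\chi)$; the factor $L(2s,\chi^2)H(s,\chi)$ is $O(d(r)\log r(|t|+2))$ there by Lemma~\ref{Lemma_2.2} and absolute convergence, so the whole matter reduces to bounding $\frac{1}{\varphi(r)}\sum_\chi \int_{-T}^{T} |L(1/2+\delta+it,\chi)|\,\frac{dt}{|s|}$. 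For this I would split the $\chi$-sum, handle $\chi_0$ by Lemma~\ref{Lemma_2.3} (pointwise convexity bound $L\ll (r(|t|+2))^{1/4+\varepsilon}$), and for $\chi\neq\chi_0$ use Cauchy--Schwarz together with the mean-value estimate Lemma~\ref{Lemma_2.4} (or Lemma~\ref{Lemma_2.5}): $\sum_\chi\int_1^T|L(1/2+it,\chi)|\,dt \ll (\varphi(r)T)^{1/2}(\varphi(r)T\log rT)^{1/2}\ll \varphi(r)T\log^{1/2}rT$. Dividing by $\varphi(r)$ and by the $\sim|t|$ from $1/s$ converts the $T$ into a $\log T$, so the vertical integral contributes $O(d(r)x^{1/2}\log^{2}x \cdot \log^{1/2} x)=O(d(r)x^{1/2}\log^{2.5}x)$, matching the claimed error. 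The error terms from Perron's formula (Lemma~\ref{Lemma_2.1}) are controlled by choosing $T=x$ (recall $r\ll x^{1/2}$ so $B(b)\ll r^{\varepsilon}$ near $b=1+1/\log x$), and by Lemma~\ref{Lemma_2.6} the trivial bound $a(m)\ll m^{\varepsilon}$ makes the ``nearest integer'' terms negligible.

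The main obstacle is getting the logarithmic powers and the uniformity in $r$ exactly right: one must combine the convexity bound for $L(2s,\chi^2)$, the mean-square bound for $\sum_\chi|L(1/2+it,\chi)|^2$, and the $d(r)$ losses from imprimitive characters and from the square-full Euler factors at primes dividing $r$, all while keeping the total error at $d(r)x^{1/2}\log^{2.5}x$ rather than a larger power of $\log x$. A secondary subtlety is justifying that the contour shift is legitimate uniformly in $\chi$ and that the residue computation yields a $c(r,k)$ that is genuinely $\ll d(r)$ and not merely $O(r^{\varepsilon})$; this requires writing $c(r,k)$ as an explicit (finite) product over $p\mid r$ times a convergent product over all primes, and bounding the former by $d(r)$ via the elementary inequality $\prod_{p\mid r}(1+O(1/p))\ll d(r)$ is false in general, so one instead tracks that each bad prime contributes a bounded factor, giving at worst $2^{\omega(r)}\leq d(r)$.
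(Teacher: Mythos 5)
Your overall analytic machinery (character decomposition, a factorization whose main factor is $L(z,\chi)L(2z,\chi^{2})$, Perron's formula, a contour shift to $\Re z\approx 1/2$ avoiding the pole of $L(2z,\chi^{2})$, Cauchy--Schwarz with Lemma \ref{Lemma_2.4} on the vertical line and convexity bounds on the horizontal segments, with the same $\log^{2.5}$ bookkeeping) is essentially the paper's. But there is a genuine gap at your very first step: the identity $\sum_{m\equiv k\,(\mathrm{mod}\ r)}a(m)m^{-z}=\frac{1}{\varphi(r)}\sum_{\chi\,(\mathrm{mod}\ r)}\bar\chi(k)\sum_{m\ge1}a(m)\chi(m)m^{-z}$ holds only when $(k,r)=1$; if $(k,r)>1$ then $\chi(k)=0$ for every $\chi$ modulo $r$, so your right-hand side vanishes identically while the left-hand side does not. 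The whole point of the Proposition (and of its use in Section 4, where the modulus is $d^{2}\delta s$ and the residue is $k=a(s)$, typically not coprime to it) is uniformity in $k$ with no coprimality assumption; the paper explicitly notes that Ivi\'c already handled $(r,k)=1$. The paper therefore begins by setting $u=(r,k)$, $r=ur_{1}$, $k=uk_{1}$, writing $m=u(r_{1}n+k_{1})$, and applying orthogonality modulo $r_{1}$ to the twisted sum $\sum_{n\le x/u}\chi(n)a(un)$; the resulting Dirichlet series factors as $L(z,\chi)L(2z,\chi^{2})G(z,\chi)F_{u}(z,\chi)$, where the extra Euler factor $F_{u}(z,\chi)$, supported on $p\mid u$ and bounded by $d(u)$ for $\Re z\ge 1/2$, is precisely where the $k$-dependence of $c(r,k)$ and the $d(r)$ in both the coefficient bound and the error term originate. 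Your sketch has no analogue of this reduction, so as written it cannot yield the stated result for general $k$.

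A secondary but real problem is your plan to treat the principal character on the vertical line by the pointwise convexity bound of Lemma \ref{Lemma_2.3}: for $\chi_{0}$ this gives a contribution of order $\frac{d(r)}{\varphi(r)}x^{1/2}T^{1/4}$ up to logarithms, and since the Perron error $x^{1+\varepsilon}/T$ forces $T\gg x^{1/2}$, this is $\gg x^{5/8}$ already for bounded $r$ (with your choice $T=x$ it is $\gg x^{3/4}$), far exceeding $d(r)x^{1/2}\log^{2.5}x$. The remedy is to keep $\chi_{0}$ inside the mean-value treatment: Lemma \ref{Lemma_2.4} sums over all characters including the principal one (equivalently, use the mean square of $\zeta(1/2+it)$ together with the imprimitivity factor $\prod_{p\mid r}(1-p^{-z})\ll d(r)$), which is exactly how the paper obtains its bound for $W_{1}$.
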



Let $u = (r, k), r = u r_{1}, k = u k_{1}, ( r_{1}, k_{1}) = 1.$
Then $m \equiv k(mod \ r)$ implies that $m = r n + k = u(r_{1} n + k_{1}).$ So we have
\begin{align}
T(x; k, r) & =   \sum_{u (n r_{1}+ k_{1}) \leq x} a( u( n r_{1}+ k_{1}))
= \sum_{\stackrel{n \leq x/u}{ n \equiv k_{1} (mod r_{1})}} a(u n) \\
&=\frac{1}{\varphi(r_{1})} \sum_{\chi \, (mod \,r_{1})} \bar{\chi}(k_{1})
 \left(\sum_{ n\leq x/u} \chi(n) a(u n)\right ),\nonumber
\end{align}
where in the last step we used the orthogonality property of Dirichlet characters.
Note that (3.4) holds for $u=1$ or $r_1=1$.

It suffices for us to evaluate the innermost sum ($X=x/u$)
$$M(X;\chi,u):=\sum_{ n\leq X} \chi(n) a(u n)$$
for any Dirichlet character $\chi$ modulo $r_1.$
Suppose that $ u = \prod_{p} p^{l}.$ Since both
$a(n)$ and $\chi(n)$ are multiplicative, we have by Euler product that ($\Re z>1$)
\begin{align}
D_{u}(z, \chi) & = \sum_{n = 1}^{\infty} \frac{a(u n) \chi (n)}{n^{z}}
  = \prod_{p} \left(\sum_{\alpha = 0}^{\infty} \frac{a( p^{l + \alpha}) \chi( p^{\alpha})}{p^{\alpha z}}\right) \\
& = \prod_{p  \nmid u} \left(\sum_{\alpha = 0}^{\infty} \frac{a( p^{\alpha}) \chi( p^{\alpha})}{p^{\alpha z}}\right) \cdot \prod_{p \mid u} \left(\sum_{\alpha = 0}^{\infty} \frac{a( p^{l + \alpha}) \chi( p^{\alpha})}{p^{\alpha z}}\right)\nonumber\\
& = \prod_{p} \left(\sum_{\alpha = 0}^{\infty} \frac{a( p^{\alpha}) \chi( p^{\alpha})}{p^{\alpha z}}\right) \cdot \prod_{p \mid u} \left(\sum_{\alpha = 0}^{\infty} \frac{a( p^{\alpha}) \chi( p^{\alpha})}{p^{\alpha z}}\right)^{-1}\nonumber\\
&\ \ \ \ \ \ \ \cdot \prod_{p \mid u} \left(\sum_{\alpha = 0}^{\infty} \frac{a( p^{l + \alpha}) \chi( p^{\alpha})}{p^{\alpha z}}\right)\nonumber\\
& = H(z, \chi) F_{u}(z, \chi), \nonumber
\end{align}
where
\begin{align}
& H(z, \chi): = \prod_{p} \left(\sum_{\alpha = 0}^{\infty} \frac{a( p^{\alpha}) \chi( p^{\alpha})}{p^{\alpha z}}\right),\\
& F_{u}(z, \chi): = \prod_{p \mid u} \left(\sum_{\alpha = 0}^{\infty} \frac{a( p^{\alpha}) \chi( p^{\alpha})}{p^{\alpha z}}\right)^{-1} \cdot \prod_{p \mid u} \left(\sum_{\alpha = 0}^{\infty} \frac{a( p^{l + \alpha}) \chi( p^{\alpha})}{p^{\alpha z}}\right).\nonumber
\end{align}

Note that if $u=1,$ then $F_u(z,\chi)\equiv 1.$ If  $u>1,$ it is easy to see that
the Dirichlet series for $F_{u}(z, \chi)$ converges absolutely for $\Re z > 0$ and
we have the estimate
\begin{equation}
F_{u}(z, \chi) \ll \prod_{p\mid u} (1 + a(p^l)p^{- 1/2} )\ll d(u),\ \Re z\geq 1/2,\ \ (u=\prod_{p}p^l).
\end{equation}

By the formula $ 1 + t + 2 t^{2} = (1-t)^{-1}(1 - t^{2})^{-1} (1 + O(t^{3}))$ we have
\begin{equation} \label{3.4}
H(z, \chi) = L(z, \chi) L(2z, \chi^{2}) G(z, \chi),\ (\Re z>1)
\end{equation}
where
\begin{align}
G(z, \chi): &  = \prod_{p} \left(1 - \frac{\chi(p)}{p^{z}}\right)
\left(1 - \frac{\chi(p)^{2}}{p^{2 z}}\right)
 \left(\sum_{\alpha = 0}^{\infty} \frac{a( p^{\alpha}) \chi( p^{\alpha})}{p^{\alpha z}} \right), \nonumber
\end{align}
which is absolutely convergent for $\Re z>1/3.$ So from (3.5),(3.6) and (3.8) we have
\begin{equation}
 D_{u}(z, \chi) = L(z, \chi) L(2z, \chi^{2}) G(z, \chi) F_{u}(z, \chi).
\end{equation}

Let $ 1\leq  T\leq X^{1000}$ be a parameter to be determined. By Lemma 2.1, we have
\begin{align}
M(X;\chi,u) &=\frac{1}{2 \pi i}\int_{1+ \varepsilon - iT}^{1+ \varepsilon + iT} D_{u}(z, \chi) \frac{X^{z}}{z} dz + O\left(\frac{X^{1+ \varepsilon} \log X }{T}\right)\\
&=\frac{1}{2 \pi i}\int_{1+ \varepsilon - iT}^{1+ \varepsilon + iT} L(z, \chi) L(2z, \chi^{2}) G(z, \chi) F_{u}(z, \chi) \frac{X^{z}}{z} dz\nonumber\\
&\ \ \ \ + O\left(\frac{X^{1+ \varepsilon} \log X}{T}\right). \nonumber
\end{align}

Let
\begin{eqnarray*}
&&C_1=\{z=\sigma+Ti:\ 1/2\leq \sigma \leq 1+\varepsilon\},\\
&&C_2=\{z=1/2+ti:\ \frac{1}{\log X}\leq t\leq T\},\\
&&C_3=\{z=\frac{e^{i \theta}}{\log X}:\ -\pi/2\leq \theta\leq \pi/2\},\\
&&C_4=\{z=1/2+ti:\ -T\leq t\leq -\frac{1}{\log X} \},\\
&&C_5=\{z=\sigma-Ti:\ 1/2\leq \sigma \leq 1+\varepsilon\}.
\end{eqnarray*}
By the residue theorem, we have
\begin{equation}
  M(X;\chi,u)=Res_{z=1}L(z, \chi) L(2z, \chi^{2}) G(z, \chi) F_{u}(z, \chi) \frac{X^{z}}{z}
 +\sum_{j=1}^4 \int_j-\int_5 + O \left(\frac{X^{1+ \varepsilon} \log X}{T}\right),
\end{equation}
where
\begin{equation}
\int_j:=\frac{1}{2\pi i}\int_{C_j}L(z, \chi) L(2z, \chi^{2}) G(z, \chi) F_{u}(z, \chi) \frac{X^{z}}{z}dz.
\end{equation}

If $\chi=\chi_0$ is the principal character, then we have
\begin{eqnarray*}
 Res_{z=1}L(z, \chi) L(2z, \chi^{2}) G(z, \chi) F_{u}(z, \chi) \frac{X^{z}}{z}=
\frac{\varphi(r_1)}{r_1}L(2, \chi_0^{2}) G(1, \chi_0) F_{u}(1, \chi_0)X.
\end{eqnarray*}
If $\chi$ is not a principal character, then
\begin{eqnarray*}
 Res_{z=1}L(z, \chi) L(2z, \chi^{2}) G(z, \chi) F_{u}(z, \chi) \frac{x^{z}}{z}=0.
\end{eqnarray*}

If $z=\sigma+Ti\ (1/2\leq \sigma\leq 1),$ then from Lemma 2.2, Lemma 2.3  and (3.7) we have
\begin{equation}
L(z, \chi) L(2z, \chi^{2}) G(z, \chi) F_{u}(z, \chi) \frac{X^{z}}{z}
\ll d(r)d(u)T^{-1}X^\sigma(r_1T)^{\frac{1-\sigma}{2}}\log^2 r_1T.
\end{equation}
If $z=\sigma+Ti\ (1 \leq \sigma\leq 1+\varepsilon),$
then
\begin{equation}
L(z, \chi) L(2z, \chi^{2}) G(z, \chi) F_{u}(z, \chi) \frac{x^{z}}{z}
\ll T^{-1}X^\sigma \log^{2}  r_1T.
\end{equation}
So from (3.13) and (3.14) we have
\begin{equation}
\int_{1}\ll \frac{X^{1+\varepsilon}}{T}+\frac{r_1^{1/4}X^{1/2+\varepsilon}}{T^{3/4}}.
\end{equation}
Similarly,
\begin{equation}
\int_{5}\ll \frac{X^{1+\varepsilon}}{T}+\frac{r_1^{1/4}X^{1/2+\varepsilon}}{T^{3/4}}.
\end{equation}

From (3.4), (3.11), (3,15) and (3.16) we obtain
\begin{eqnarray}
T(x,k,r)&&=\frac{c(r,k)}{r_{1}}X + \frac{1}{\varphi(r_1)}\sum_{\chi(mod\ r_1)}
\overline{\chi}(k_1)\left(\int_2+\int_3+\int_4\right)\\
&&\ \ \ \ \  + O \left(\frac{X^{1+\varepsilon}}{T}+
\frac{r_1^{1/4}X^{1/2+\varepsilon}}{T^{3/4}}\right),\nonumber
\end{eqnarray}
where
\begin{equation}
 c(r,k):= L(2, \chi_0^{2}) G(1, \chi_0) F_{u}(1, \chi_0).
\end{equation}

It is easy to see that
\begin{equation}
 c(r,k)\ll |F_{u}(1, \chi_0)|\ll d(u)\leq d(r).\end{equation}
We have
\begin{eqnarray}
\frac{1}{\varphi(r_1)}\sum_{\chi(mod\ r_1)}
\overline{\chi}(k_1)\left(\int_2+\int_3+\int_4\right)\ll W_1+W_2+W_3,
\end{eqnarray}
where
\begin{eqnarray*}
&&W_1:=X^{1/2}d(u)\log r_1T\times \frac{1}{\varphi(r_1)}\sum_{\chi(mod\ r_1)}\int_{1}^{T}
|L(1/2+it, \chi)    |\frac{dt}{t},\\
&&W_2:=X^{1/2}d(u)\log r_1T\times \frac{1}{\varphi(r_1)}\sum_{\chi(mod\ r_1)}\int_{\frac{1}{\log X}}^{1}|L(1/2+it, \chi)|dt,\\
&&W_3:=X^{1/2}d(u)\log r_1T\times
{\varphi(r_1)}\sum_{\chi(mod\ r_1)}\int_0^{\frac \pi 2}|L(1/2+\frac{e^{i \theta}}{\log X}, \chi)|d\theta.
\end{eqnarray*}

From Lemma 2.4 and Cauchy's inequality we have
\begin{eqnarray*}
 &&\ \ \ \  \frac{1}{\varphi(r_1)}\sum_{\chi(mod\ r_1)}\int_{1}^{T}
|L(1/2+it, \chi)    |\frac{dt}{t}\\
&&\ll \frac{\log T}{\varphi(r_1)}\max_{1\ll T_0\ll T}
\frac{1}{T_0}\sum_{\chi(mod\ r_1)}\int_{T_0}^{2T_0}
|L(1/2+it, \chi)    |dt\nonumber
\\&& \ll \frac{\log T}{\varphi(r_1)}\max_{1\ll T_0\ll T}\frac{1}{T_0}
\left(\sum_{\chi(mod\ r_1)}\int_{T_0}^{2T_0}|L(1/2+it, \chi)    |^2dt\right)^{1/2}
\left(\sum_{\chi(mod\ r_1)}\int_{T_0}^{2T_0}dt\right)^{1/2}\\
&&\ll \frac{\log T}{\varphi(r_1)}\max_{1\ll T_0\ll T}\frac{1}{T_0}
(\varphi(r_1)T_0\log T_0)^{1/2}(\varphi(r_1)T_0 )^{1/2}\\
&&\ll \log^{3/2} T,
\end{eqnarray*}
which implies that
\begin{equation}
W_1\ll X^{1/2}d(u)\log^{5/2} X.
\end{equation}

Similarly to the above argument, from Lemma 2.5 we can get
$$\frac{1}{\varphi(r_1)}\sum_{\chi(mod\ r_1)}\int_{\frac{1}{\log X}}^{1}|L(1/2+it, \chi)|dt\ll \log T, $$
which implies that
\begin{equation}
W_2\ll X^{1/2}d(u)\log^2 X.
\end{equation}
Similarly, we have
\begin{equation}
W_3\ll X^{1/2}d(u)\log^2 X.
\end{equation}

Inserting (3.20)-(3.23) into (3.17) we get
\begin{eqnarray}
T(x,k,r)&&=\frac{c(r,k)}{r_{1}}X +O(X^{1/2}d(u)\log^{2.5} X)\\
&&=\frac{c(r,k)}{r}x+O(x^{1/2}d(r)\log^{2.5} x)\nonumber
\end{eqnarray}
by choosing $T=X^{10}.$
This completes the proof of the proposition.

\section{\bf Proof of  Theorem $1$ }

\label{intro} \setcounter{equation}{0}
\medskip

Each number $n$ can be uniquely written as $n=qs$ such that
$q$ is square-free, $s$ is square-full and $(q,s)=1$.  It is well-known that
  $a(\ell)\equiv 1$ for any square-free $\ell.$ Thus we have
\begin{eqnarray*}
  Q(x)&&=\sum_{n\leq x} a(n + a(n)) = \sum_{\stackrel{q s \leq x}{(q, s) = 1}} a( q s + a(s))\\
&&= \sum_{k \leq A(x)}\ \ \sum_{\stackrel{s \leq x}{a(s)= k}}
 \sum_{\stackrel{q \leq x / s}{(q, s) = 1}} a(q s + k)\nonumber\\
 &&= \sum_{k \leq A(x)}\ \ \sum_{\stackrel{s \leq x}{a(s)= k}}\ \
 \sum_{\stackrel{d^2n \leq x / s}{(d, s) =(n,s)= 1}} \mu(d)a(d^2n s + k)\nonumber\\
 &&=\sum_{k \leq A(x)}\ \ \sum_{\stackrel{s \leq x}{a(s)= k}}\ \
 \sum_{\stackrel{d^2s\leq x}{(d,s)=1}}\mu(d)
 \sum_{\stackrel{n \leq x /d^2 s}{(n,s)= 1}}a(d^2n s + k)\nonumber
\end{eqnarray*}
where $A(x):=\max_{n\leq x}a(n)$ and in the fourth "=" we used  the
 familiar  relation $ \mu^{2} (n) = \sum_{d^{2} \mid n} \mu (d)$.

 Suppose $x^{\varepsilon}\ll y\ll x^{1/2}$ is a parameter to be determined later. We write
 \begin{eqnarray}
   Q(x)= Q_1(x,y)+ Q_2(x,y),
 \end{eqnarray}
 where
\begin{eqnarray*}
 && Q_1(x,y): =\sum_{k \leq A(x)}\ \ \sum_{\stackrel{s \leq x}{a(s)= k}}\ \
 \sum_{\stackrel{d^2s\leq y}{(d,s)=1}}\mu(d)
 \sum_{\stackrel{n \leq x /d^2 s}{(n,s)= 1}}a(d^2n s + k),\\
 && Q_2(x,y): =\sum_{k \leq A(x)}\ \ \sum_{\stackrel{s \leq x}{a(s)= k}}\ \
 \sum_{\stackrel{y<d^2s\leq x}{(d,s)=1}}\mu(d)
 \sum_{\stackrel{n \leq x /d^2 s}{(n,s)= 1}}a(d^2n s + k).
\end{eqnarray*}

If $d^2s\leq y$, it follows that $s\leq y$ and $k\leq A(y).$ So $Q_1(x,y)$ can be rewritten as
\begin{eqnarray}
   Q_1(x,y) =\sum_{k \leq A(y)}\ \ \sum_{\stackrel{s \leq y}{a(s)= k}}\ \
 \sum_{\stackrel{d^2s\leq y}{(d,s)=1}}\mu(d)
 \sum_{\stackrel{n \leq x /d^2 s}{(n,s)= 1}}a(d^2n s + k).
\end{eqnarray}

By the well-known bound $a(n)\ll n^\varepsilon$ we have (note that $d^2s$ is square-full)
\begin{eqnarray}
Q_2(x,y)&&\ll \sum_{k \leq A(x)}\  \sum_{\stackrel{s \leq x}{a(s)= k}}\ \
\sum_{\stackrel{y<d^{2} s \leq x}{(d, s) = 1}} |\mu (d)|\frac{x^{1+\varepsilon}}{d^2s}\\
&&\ll x^{1+\varepsilon}
\sum_{\stackrel{y<d^{2} s \leq x}{(d, s) = 1}}\frac{|\mu (d)|}{d^2s}
\ll x^{1+\varepsilon}
\sum_{ y<  s \leq x }\frac{1}{ s}
\ll  \frac{x^{1+\varepsilon}}{y^{1/2}}\nonumber
\end{eqnarray}
by using partial summation with the help of the familiar bound
\begin{equation}
\sum_{s\leq u}1\ll u^{1/2}.
 \end{equation}

Now we evaluate the sum $Q_1(x,y).$
We first consider  the innermost sum in $Q_1(x,y).$
By the  elementary formula
$ \sum_{d \mid n} \mu (d)  =[1/n]$ we have
\begin{equation}
 \sum_{n \leq x / d^{2} s,\,(n, s) = 1} a(d^{2} n s + k) = \sum_{\delta \mid s}\mu (\delta) \ \ \sum_{ \delta n_{1} \leq x/ d^{2} s} a( d^{2}\, \delta\, n_{1}\, s + k).
\end{equation}

From (4.2) and (4.5) we have
 \begin{eqnarray}
 \ \ \ \ Q_1(x,y)&& =\sum_{k \leq A(y)}\ \ \sum_{\stackrel{s \leq y}{a(s)= k}} \sum_{\stackrel{d^{2} s \leq y}{(d, s) = 1}}\
\mu (d) \sum_{\delta \mid s}\mu (\delta)  \sum_{ n  \leq x/ d^{2}\delta s} a( d^{2}\, \delta\, n\, s + k)\\
&&=\sum_{k \leq A(y)}\ \ \sum_{\stackrel{s \leq y}{a(s)= k}}\sum_{\stackrel{d^{2} s \leq y}{(d, s) = 1}}\
\mu (d) \sum_{\delta \mid s}\mu (\delta)
 \sum_{\stackrel{ n\leq x+k}{n \equiv k( d^{2}\delta s)}} a(n).\nonumber
\end{eqnarray}

By Proposition 1 we have
\begin{eqnarray}
\sum_{\stackrel{ n\leq x+k}{n \equiv k( d^{2}\delta s)}} a(n)
&&=\frac{c(d^{2}\delta s,k)}{d^{2}\delta s}(x+k)+O(x^{1/2}\log^4 x)\\
&&=\frac{c(d^{2}\delta s,k)}{d^{2}\delta s}x+O(x^{1/2}\log^4 x).\nonumber
\end{eqnarray}

Inserting (4.7) into (4.6) we get
\begin{eqnarray}
Q_1(x,y)=xJ_{1}(y)+O(x^{1/2}\log^4 x\times J_{2}(y)),
\end{eqnarray}
where
\begin{eqnarray*}
J_{1}(y)&&:=\sum_{k\leq A(y)}\sum_{\stackrel{s\leq y}{a(s)=k}}
\sum_{\stackrel{d^{2} s \leq y}{(d, s) = 1}}\
\mu (d) \sum_{\delta \mid s}\mu (\delta)\frac{c(d^{2}\delta s,k)}{d^{2}\delta s}, \\
J_{2}(y)&&:=\sum_{k\leq A(y)}\sum_{\stackrel{s\leq y}{a(s)=k}}
\sum_{\stackrel{d^{2} s \leq y}{(d, s) = 1}}\
|\mu (d)| \sum_{\delta \mid s}|\mu (\delta)|.
\end{eqnarray*}
Obviously, we have  $ \sum_{\delta \mid s}|\mu (\delta)|
\leq d(s)\ll s^\varepsilon$. So we have by (4.4) that
\begin{eqnarray}
J_{2}(y)&&\ll
\sum_{\stackrel{d^{2} s \leq y}{(d, s) = 1}}\
|\mu (d)| \sum_{\delta \mid s}|\mu (\delta)|
 \ll y^{\varepsilon}\sum_{s\leq y}1\ll y^{1/2+\varepsilon}.
\end{eqnarray}

 Now we consider $J_{1}(y).$ We can write
 \begin{eqnarray}
\ \ \ \ \ \ J_{1}(y)&&=\sum_{k\leq A(y)}\sum_{\stackrel{s\leq y}{a(s)=k}}\frac 1s
\sum_{\stackrel{d \leq \sqrt{\frac ys}}{(d, s) = 1}}\
\frac{\mu (d)}{d^2} \sum_{\delta \mid s}
\frac{\mu (\delta)c(d^{2}\delta s,k)}{\delta}\\
&& =J_{11}+O(J_{12}),\nonumber
\end{eqnarray}
where
\begin{eqnarray}
J_{11}&&:=\sum_{k\leq A(y)}\sum_{\stackrel{s\leq y}{a(s)=k}}\frac 1s
 \sum_{\stackrel{d=1}{(d, s) = 1}}^\infty\
 \frac{\mu (d)}{d^2} \sum_{\delta \mid s}
\frac{\mu (\delta)c(d^{2}\delta s,k)}{\delta},  \nonumber\\
J_{12}&&:=\sum_{k\leq A(y)}\sum_{\stackrel{s\leq y}{a(s)=k}}\frac{s^\varepsilon}{s}
\sum_{d>\sqrt{\frac ys}}\frac{d^\varepsilon}{d^2},
\nonumber
\end{eqnarray}
where we used the bound $c(r,k)\ll d(r)\ll r^\varepsilon.$

By (4.4) and partial summation it is easy to see that
 \begin{eqnarray}
 J_{12}\ll \sum_{ s\leq y }\frac{s^\varepsilon}{\sqrt s}\frac{1}{\sqrt y}\ll
   y^{-1/2+\varepsilon} .
 \end{eqnarray}

Let  $y_0$ denote the smallest natural number not exceeding $y$ such that
$A(y)=a(y_0).$ Then we can write
\begin{equation}
J_{11}=C+O(E_y),
\end{equation}
 where
\begin{eqnarray}
&&C=\sum_{k=1}^\infty\sum_{\stackrel{s=1}{a(s)=k}}^\infty\frac 1s
 \sum_{\stackrel{d=1}{(d, s) = 1}}^\infty\
  \frac{\mu (d)}{d^2} \sum_{\delta \mid s}
\frac{\mu (\delta)c(d^{2}\delta s,k)}{\delta},\\
&&E_y:=\sum_{k>A(y)} \sum_{\stackrel{s>y_0}{a(s)=k}} \frac 1s
 \sum_{\stackrel{d=1}{(d, s) = 1}}^\infty\
  \frac{|\mu (d)|}{d^2} \sum_{\delta \mid s}
\frac{|\mu (\delta)|c(d^{2}\delta s,k)}{\delta}.\nonumber
\end{eqnarray}

From Lemma 2.6  we get that for any small positive constant $\varepsilon>0,$ the inequality
\begin{eqnarray*}
(1-0.1\varepsilon )\frac{\log 5}{4}\cdot\frac{\log y}{\log\log y}
<\log  A(y)<(1+0.1\varepsilon )\frac{\log 5}{4}\cdot\frac{\log y}{\log\log y}
\end{eqnarray*}
holds for $y_0>y^{1-\varepsilon}.$ If $y_0\leq y^{1-\varepsilon}, $ then
\begin{eqnarray*}
 \log  A(y)&&  <(1+0.1\varepsilon )\frac{\log 5}{4}\frac{\log y^{1-\varepsilon}}{\log\log y^{1-\varepsilon}}\\
 &&=(1-0.9\varepsilon-0.1\varepsilon^2)
 \frac{\log 5}{4}\frac{\log y}{\log\log y+\log(1-\varepsilon)}.
\end{eqnarray*}
The above two formulas imply that
$$(1-0.9\varepsilon-0.1\varepsilon^2)
 \frac{\log 5}{4}\frac{\log y}{\log\log y+\log(1-\varepsilon)}
 <(1-0.1\varepsilon )\frac{\log 5}{4}\cdot\frac{\log y}{\log\log y},
 $$
This is a contradiction if $\varepsilon>0$ is small enough. So we have
$y_0> y^{1-\varepsilon}.$ This fact via (4.4) implies that
\begin{eqnarray}
 E_y&&\ll\sum_{k>A(y)} \sum_{\stackrel{s>y_0}{a(s)=k}} \frac{s^\varepsilon }{s}
 \sum_{\stackrel{d=1}{(d, s) = 1}}^\infty \frac{d^\varepsilon}{d^2}
 \ll
 \sum_{ s>y^{1-\varepsilon} }\frac{s^\varepsilon}{s}\ll y^{-1/2+\varepsilon}.
\end{eqnarray}

 From (4.8)-(4.14) we get
\begin{eqnarray}
Q_1(x,y)=Cx+O(x^{1+\varepsilon}y^{-1/2}+x^{1/2+\varepsilon}y^{1/2}).
\end{eqnarray}

Now  Theorem 1 follows from (4.1), (4.3) and (4.15) by choosing $y=x^{1/2}.$

\subsection*{Acknowledgements}
The author is very grateful to Professor Wenguang Zhai for his enthusiastic guidance. Also the author wants to thank the referee for his helpful and detailed comments.


\normalsize

\end{document}